\newcommand{\eps}{\varepsilon}
\newcommand{\Z}{{\mathbb Z}}
\newcommand{\N}{{\mathbb N}}
\newcommand{\B}{{\mathsf B}}
\newcommand{\R}{{\mathbb R}}
\newcommand{\s}{{\widehat S}}
\let\phi=\varphi
\newcommand{\E}{{\mathbb E}}
\newcommand{\tW}{{\widetilde W}}
\newcommand{\8}{{\infty}}
\newcommand{\IP}{{\mathbb P}}
\newcommand{\IE}{{\mathbb E}}
\DeclareMathSymbol{\widehatsym}{\mathord}{largesymbols}{"62}
\newcommand{\hW}{\widehat{W}}
\newcommand{\capa}{\mathop{\mathrm{cap}}}
\newcommand{\htau}{\widehat{\tau}}
\newcommand{\tS}{{\widetilde S}}
\newtheorem{theo}{Theorem}
\newtheorem{lem}[theo]{Lemma}
\newtheorem{df}[theo]{Definition}
\newtheorem{rem}[theo]{Remark}
\title{Rate of escape of the conditioned 
   two-dimensional simple random walk}
\author{Orph\'ee Collin$^{1}$ \and
 Serguei~Popov$^{2}$}
\begin{document}

\maketitle

{\footnotesize 
\noindent $^{~1}$Universit\'e Paris Cit\'e 
and Sorbonne Universit\'e, CNRS, Laboratoire de Probabilit\'es,
Statistique et Mod\'elisation, F--75013 Paris, France
\\
\noindent e-mail:
\texttt{collin@lpsm.paris}

\noindent $^{~2}$Centro de Matem\'atica, University of 
Porto, Porto, Portugal\\
\noindent e-mail: \texttt{serguei.popov@fc.up.pt}

}

\begin{abstract}
We prove sharp asymptotic estimates for the rate of escape of the two-dimensional simple random walk conditioned to avoid a fixed finite set. We derive it from asymptotics available for the continuous analogue of this process~\cite{CC22}, with the help of a KMT-type coupling adapted to this setup.
\\[.3cm]\textbf{Keywords:} Brownian motion, conditioning, KMT approximation, transience, Doob's $h$-transform 
\\[.3cm]\textbf{AMS 2010 subject classifications:}
60J10, 60J60, 60J65
\\[.3cm] This work is licensed under a Creative Commons Attribution | 4.0 International licence (CC-BY 4.0, https://creativecommons.org/licenses/by/4.0/).
\end{abstract}

\section{Introduction and main results}
\label{s_intro}
In this paper we study two-dimensional simple random walk (also 
abbreviated as SRW)
conditioned on not hitting the origin. It appeared in~\cite{CPV16}
(see also~\cite{CP22_01,CP17})
as the main ingredient in the construction of two-dimensional
random interlacement process. Besides taking part in that
construction, it has a number of remarkable properties
itself (cf.\ e.g.~\cite{CPV16,GPV19,PRU20}).
Just to mention some of these
(in the following, the conditioned SRW is denoted by~$\s$):
\begin{itemize}
 \item the walk~$\s$ is transient; however, \emph{any}
infinite subset of~$\Z^2$ is recurrent (i.e., visited infinitely
many times a.s.);
 \item a (distant) site~$y$ is eventually visited by~$\s$
with probability converging to~$\frac{1}{2}$, as $|y|\to\infty$.
 \item if one considers a ``typical'' large subset of~$\Z^2$
(i.e., a box or a segment), then the proportion of its sites
which are eventually visited by~$\s$ is a random variable
that converges to Uniform$[0,1]$ in distribution
(as the size of that subset grows).
 \item two independent copies of~$\s$ a.s.\ meet, and, moreover,
 $\s$ a.s.\ meets an independent SRW~$S$ infinitely many times.
\end{itemize}
Here, our aim is to study
the rate of escape of $(\s_n)_{n\geq 0}$ to infinity, 
which we measure via the so-called \emph{future minima process}:
\begin{equation}
\label{df_future_min}
M_n =\inf_{m\geq n} |\s_m|, \qquad n\geq 0,
\end{equation}
where~$|\cdot|$ denotes the Euclidean norm (in~$\Z^2$ or~$\R^2$).
One of the main results of~\cite{PRU20} is that~$M$
``oscillates a lot'': for
every $0 < \delta < \frac{1}{2}$ we have, almost surely,
\begin{equation}
\label{result_PRU}
M_{n} \leq n^{\delta }\ \text{i.o.,} \quad \text{but} \quad M_{n}
\geq \tfrac{\sqrt{n}}{\ln ^{\delta } n}\ \text{i.o.}.
\end{equation}
We aim at obtaining a finer version of the above result,
via a comparison to the corresponding continuous model
(which we define and discuss later).

Let us now pass to the formal discussion.
Let $S=(S_n)_{n\geq 0}$ be the simple random walk on~$\Z^2$.
In the sequel, we will write~$\IP_x$ and~$\IE_x$ for the 
probability and the expectation with respect to the process
(SRW or another one)
started at~$x$, when no confusion can arise. 
The SRW's \emph{potential kernel} 
is the unique function (up to a constant factor) 
on~$\Z^2$ that is~$0$ at the origin and harmonic 
on $\Z^2\setminus \{0\}$. It is defined by:
\[
a(x)= 
\sum_{n\geq 0} \big(\mathbb{P}_0[S_n=0]-\mathbb{P}_0[S_n=x]\big),
  \qquad x\in\Z^2.
\]
Harmonicity of $a$ outside the origin implies that 
$(a(S_{n\wedge \tau_0}))_{n\geq 0}$, where~$\tau_0$ 
is the time of the first visit of~$S$ to the origin, 
is a martingale (we therefore say that~$a$ is a scale function 
of $S_{\cdot\wedge \tau_0}$). 
See Section~4.4 of~\cite{LL10} for more details
on the potential kernel. 
Here, we only recall a useful asymptotic expression for~$a$:
\begin{equation}
\label{asympta}
\frac{\pi a(x)}{2} 
=\ln |x| + \gamma_{\rm EM}+ \frac32\ln 2 + O(|x|^{-2}),
\end{equation}
as $|x|$ goes to infinity, where~$\gamma_{\rm EM}$ is 
the Euler-Mascheroni constant\footnote{$\gamma_{\rm EM}
=\lim_{n\to\infty}\big(1+\frac{1}{2}
+\cdots+\frac{1}{n}-\ln n\big)\approx 0.577$}.

We now consider another Markov chain on $\Z^2\setminus \{0\}$, 
denoted by~$\s=(\s_n)_{n\geq 0}$, 
which is obtained by applying the 
Doob's transform to~$(S_n)_{n\geq 0}$ with respect to
the potential kernel~$a$.
Explicitly, it is a Markov chain on~$\Z^2\setminus \{0\}$ 
with transition probabilities:
\[
p_{x,y}=\frac{a(y)}{4a(x)} \mathbf{1}_{|x-y|=1}.
\]
This Markov chain is called \emph{SRW
on~$\Z^2$ conditioned on not hitting the origin} 
since it appears 
as a natural limiting process, as~$r$ goes to infinity, 
of the law of~$S$ conditioned on reaching distance to the origin
larger than $r$, before visiting the origin.
We have already mentioned some remarkable properties of~$\s$;
let us refer to Section~4.1 of~\cite{P21} for an elementary
introduction to Doob's $h$-transforms, and to 
Section~4.2 of~\cite{P21} for the detailed discussion
of this definition and for derivation of some 
basic properties of~$\s$.

The main result of this paper is the following 
theorem about the future minimum process~$M$
(recall~\eqref{df_future_min}). It can be seen
as the discrete counterpart of the corresponding results 
for the analogous
``continuous model'' (which is the two-dimensional
Brownian motion conditioned on staying outside of a disk,
as explained in Section~\ref{s_coupl_BM} below), 
see Theorems~1.1 and~1.2 of~\cite{CC22}.
\begin{theo} 
\label{th-main}
Independently of the starting point of~$\s$ in $\Z^2\setminus \{0\}$,
the two following results hold. 
\begin{itemize}
 \item For $g: \R_+\to \R_+$  non-increasing such 
 that $t\mapsto (\ln t) g(\ln \ln  t)$ is non-decreasing,
\[
\IP\big[M_n \leq e^{(\ln n) g(\ln \ln n)}\text{ i.o.}\big]
 = 0 {\rm\ or\ } 1
    \text{ according to } \int^\8 g(u) du < \8  {\rm\ or\ }= \8.
\]
 \item The limit 
\[
 K = \limsup_{t \to \8} \frac{M_n}{\sqrt{n \ln \ln \ln  n}} 
\]
exists, is a.s. constant and non-degenerate: $0<K<\infty$.
\end{itemize}
\end{theo}
It is straightforward to see that the above
result indeed generalizes~\eqref{result_PRU};
in particular, take $g\equiv \delta$
to obtain that $M_{n} \leq n^{\delta }$ i.o..
The main idea of the proof will be using 
 the dyadic coupling (also known as KMT 
approximation, or Hungarian embedding) to be able to
``compare'' the conditioned SRW to the conditioned Brownian
motion; then, it will be possible to derive
Theorem~\ref{th-main} from the corresponding
results in the continuous case  
(see Theorem~\ref{th-CC22} in Section~\ref{s_coupl_BM}).
We mention that the actual value of~$K$ is not known.

\subsection{A generalization: SRW conditioned on not hitting 
a finite set}
\label{s_nothit_A}
A natural question that one may pose is the following one:
what if, instead of conditioning the SRW on not hitting
the origin, we condition it on not hitting some \emph{finite}
set $A\subset\Z^2$, will it change anything? In this section
we argue that the answer to this question is essentially ``no''.
While one may take this fact as evident since (assuming without loss
of generality that $0\in A$) $S$ conditioned on not hitting~$A$
``should be'' $\s$ conditioned on not hitting~$A$ and a transient
walk (such as~$\s$) will eventually escape from~$A$ anyway, 
we still decided
to discuss this question here since conditioning on zero-probability
events can be tricky, so, in principle, one should not always
blindly believe one's intuition in that respect.

We also remark that conditioning the SRW on not hitting
an \emph{infinite} set~$A'$ can indeed be a quite different
story (in the sense that the resulting process will be 
``drastically different''\footnote{Indeed, as we mentioned,
$\s$ hits any fixed infinite set infinitely many
times a.s.} from~$\s$); however, 
questions of this sort are beyond the scope of this paper.

Let us fix a finite set~$A$ containing the origin
 and study the SRW conditioned 
on not hitting $A$, which we define now. 
Consider the function $q_A:\Z^2 \to \R$ defined by 
\[
q_A(x)= a(x)-\E_x[a(S_{\tau_A})], \qquad x\in \Z^2,
\]
with $\E_x$ denoting the expectation under 
law $\mathbb{P}_x$ and~$\tau_A$ the hitting time of~$A$ by~$S$.
Function~$q_A$ vanishes on~$A$ and is harmonic
on $\Z^2\setminus A$, so~$q_A$ is a scale function 
for $S_{\cdot\wedge \tau_A}$.
Furthermore, it has the following asymptotic expression:
\begin{equation}
\label{asympta2}
\frac{\pi q_A(x)}2=\ln |x| -\frac\pi2\capa(A) 
+ \gamma_{\rm EM}+ \frac32\ln 2 + O(|x|^{-1}),
\end{equation}
as~$|x|$ goes to infinity, where~$\capa(A)$ is the \emph{capacity} 
of~$A$. We refer to chapter~3 of~\cite{P21} for all necessary 
details.

In the following, we use the general construction 
of chapter~4.1 of \cite{P21} (see also exercise 4.6 there).
Consider the Markov chain on $\Z^2\setminus A$, 
denoted by~$\s^{(A)}=(\s^{(A)}_n)_{n\geq 0}$, 
which is obtained by applying the 
Doob's transform to $(S_n)_{n\geq 0}$ with respect to $q_A$.
Analogously to~$\s$, it is the Markov chain on $\Z^2\setminus A$ 
with transition probabilities:
\[
p^{(A)}_{x,y}=\frac{q_A(y)}{4 q_A(x)} \mathbf{1}_{|x-y|=1}.
\]

We now prove the following fact ($\htau$ denotes the hitting 
times with respect to the conditioned walk~$\s$):
\begin{lem}
\label{l_hS=hSA}
 For any $x_0\notin A$ and any nearest-neighbour path
$\gamma = (\gamma_0,\gamma_1,\ldots,\gamma_m)$ which does not touch~$A$ and such that $x_0=\gamma_0$,
we have
\begin{equation}
\label{eq_hS=hSA}
 \IP_{x_0}[\s_1=\gamma_1,\ldots,\s_m=\gamma_m 
   \mid \htau_A=\infty]
    = \IP_{x_0}[\s^{(A)}_1=\gamma_1,\ldots,
        \s^{(A)}_m=\gamma_m];
\end{equation}
i.e., it holds indeed that~$\s$ conditioned on never hitting~$A$
is~$\s^{(A)}$.
\end{lem}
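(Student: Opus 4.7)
My strategy is to reduce~\eqref{eq_hS=hSA} to computing the escape probability
$\psi(x):=\IP_x[\htau_A=\infty]$ and then identifying it as $q_A(x)/a(x)$.

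First, telescoping the transition probabilities of $\s^{(A)}$ shows that the right-hand side of~\eqref{eq_hS=hSA} equals $q_A(\gamma_m)/(4^m q_A(x_0))$. For the left-hand side, because $\gamma$ avoids~$A$ the event $\{\s_i=\gamma_i,\ i\leq m\}$ is contained in $\{\htau_A>m\}$, so the Markov property of~$\s$ at time~$m$ combined with the explicit transition formula $p_{x,y}=a(y)/(4a(x))$ yields
\[
\IP_{x_0}[\s_i=\gamma_i,\,i\leq m\mid \htau_A=\infty]
=\frac{a(\gamma_m)}{4^m a(x_0)}\cdot\frac{\psi(\gamma_m)}{\psi(x_0)}.
\]
So the lemma is equivalent to proving $\psi(x)=q_A(x)/a(x)$ on $\Z^2\setminus A$.

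To establish this identity, I would invoke the $h$-transform formula relating~$\s$ to~$S$: since $0\in A$ entails $\{\tau_A>n\}\subseteq\{\tau_0>n\}$, one has
$a(x)\IP_x[\htau_A>n] = \IE_x[a(S_n)\mathbf{1}_{\tau_A>n}]$ for $x\notin A$. Now decompose $a=q_A+u_A$, where $u_A(y):=\IE_y[a(S_{\tau_A})]$. Optional stopping at the bounded time $n\wedge\tau_A$ applied to the martingale $q_A(S_{\cdot\wedge\tau_A})$, together with $q_A\equiv 0$ on~$A$, gives $\IE_x[q_A(S_n)\mathbf{1}_{\tau_A>n}]=q_A(x)$. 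For the remainder~$u_A$, subtracting~\eqref{asympta} from~\eqref{asympta2} shows that $u_A(y)\to\capa(A)$ as $|y|\to\infty$, so $u_A$ is a bounded function; since $S$ is recurrent, $\IP_x[\tau_A>n]\to 0$, whence $\IE_x[u_A(S_n)\mathbf{1}_{\tau_A>n}]\to 0$ by dominated convergence. Sending $n\to\infty$ yields $a(x)\psi(x)=q_A(x)$, and substituting back collapses the ratio to $q_A(\gamma_m)/(4^m q_A(x_0))$, matching the right-hand side of~\eqref{eq_hS=hSA}.

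The genuine work is in the identification of~$\psi$: a softer uniqueness-of-harmonic-functions argument is not available here, because the transient walk~$\s$ admits many bounded $\s$-harmonic functions on $\Z^2\setminus A$, so harmonicity and boundary values alone do not pin down~$\psi$. The key technical input is the decomposition $a=q_A+u_A$ with \emph{bounded} remainder~$u_A$, which rests on the matching leading logarithmic terms in~\eqref{asympta} and~\eqref{asympta2}; combined with optional stopping at the bounded stopping time $n\wedge\tau_A$, this circumvents any uniform-integrability worry about the unbounded function $a(S_n)$.
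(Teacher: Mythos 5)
Your proof is correct, but you reach the key identity $\IP_x[\htau_A=\infty]=q_A(x)/a(x)$ (the paper's~\eqref{escape_A_hS}) by a genuinely different route. The paper works at a finite radius~$R$: it invokes Lemma~4.4 of~\cite{P21} to rewrite $\IP_x[\htau_A>\htau_R]$ as $\IP_x[\tau_A>\tau_R\mid \tau_0>\tau_R]$ up to a $(1+O((R\ln R)^{-1}))$ factor, evaluates that ratio via the hitting-probability formulas of~\cite{P21}, and sends $R\to\infty$. You instead stay at the infinite time horizon, starting from the $h$-transform identity $a(x)\IP_x[\htau_A>n]=\IE_x[a(S_n)\mathbf{1}_{\tau_A>n}]$, splitting $a=q_A+u_A$, killing the $q_A$ part by optional stopping of the martingale $q_A(S_{\cdot\wedge\tau_A})$ at the bounded time~$n$, and killing the $u_A$ part by boundedness of $u_A$ together with recurrence of $S$. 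Both arguments are valid; yours is more self-contained, resting only on the Doob-transform formula, elementary martingale theory, and recurrence, whereas the paper's is shorter given the apparatus of~\cite{P21}. One small remark: the boundedness of $u_A(y)=\IE_y[a(S_{\tau_A})]$ follows immediately from $A$ being finite, since $u_A(y)\le\max_{z\in A}a(z)$; there is no need to appeal to the asymptotic expansions~\eqref{asympta} and~\eqref{asympta2}. Your closing comment about the unavailability of a soft uniqueness argument is apt: a uniqueness argument can in fact be made (bounded $\s$-harmonic functions on $\Z^2\setminus A$ vanishing on $A$ and tending to $1$ at infinity are unique, by martingale convergence along the transient $\s$), but it requires first establishing transience and the boundary behaviour, so it is no cheaper than what you did.
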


\begin{proof}
 Let us first show that, for $x_0\notin A$,
\begin{equation}
\label{escape_A_hS}
 \IP_{x_0}[\htau_A=\infty] = \frac{q_A(x_0)}{a(x_0)}.
\end{equation}
Indeed, take a large $R$ such that\footnote{Here and in the 
sequel, $B(x,r)$ stands for the disk 
$\{y\in\mathcal{X}: |y-x|\le r\}$ and $B(x, r)^\complement$ stands for its complement in $\mathcal{X}$, where $\mathcal{X}=\Z^2$
or~$\R^2$ (depending on the context).}
$B(0,R)$ contains 
both~$x_0$ and~$A$, and
abbreviate~$\tau_R$ 
(respectively, $\htau_R$)
to be the hitting time of $B(0,R)^\complement$
by~$S$ (respectively, by~$\s$). Denote $b:=1+\max_{x\in A} |x|$.
By, first, Lemma~4.4 of~\cite{P21},
and, then, Lemma~3.12 and formula~(3.52) of~\cite{P21},
we can write (recall that $0\in A$)
\begin{align*}
 \IP_{x_0}[\htau_A>\htau_R] &= 
   \IP_{x_0}[\tau_A>\tau_R \mid \tau_0>\tau_R] 
   \big(1+O((R\ln R)^{-1})\big)\\
 &= \frac{\IP_{x_0}[\tau_A>\tau_R]}
  {\IP_{x_0}[\tau_0>\tau_R]}\big(1+O((R\ln R)^{-1})\big)\\
  &= \frac{q_A(x_0)}{a(R)+O(\ln b)+O(R^{-1})}\times
   \frac{a(R)+O(R^{-1})}{a(x_0)}\big(1+O((R\ln R)^{-1})\big);
\end{align*}
sending~$R$ to infinity, we obtain~\eqref{escape_A_hS}.

Next, denoting $y=\gamma_m$ we have, 
using~\eqref{escape_A_hS} twice
\begin{align*}
\lefteqn{
\IP_{x_0}[\s_1=\gamma_1,\ldots,\s_m=\gamma_m 
   \mid \htau_A=\infty]
}\\
   &= \frac{a(x_0)}{q_A(x_0)}
   \IP_{x_0}[\s_1=\gamma_1,\ldots,\s_m=\gamma_m 
   , \htau_A=\infty]\\
& = \frac{a(x_0)}{q_A(x_0)} \times
 \IP_{y}[\htau_A=\infty] \times \frac{a(y)}{a(x_0)}
   \IP_{x_0}[S_1=\gamma_1,\ldots,S_m=\gamma_m ]\\
&= \frac{q_A(y)}{q_A(x_0)}
   \IP_{x_0}[S_1=\gamma_1,\ldots,S_m=\gamma_m ]\\
&= \IP_{x_0}[\s^{(A)}_1=\gamma_1,\ldots,
     \s^{(A)}_m=\gamma_m],
\end{align*}
and the proof is complete.
\end{proof}

Now, with Lemma~\ref{l_hS=hSA} to hand, it becomes
clear that Theorem~\ref{th-main} should hold
with~$\s^{(A)}$ on the place of~$\s$,
due to the usual excursion 
decomposition argument. Namely, 
an $\s$-trajectory has an a.s.\ finite number of
excursions from the external boundary of~$A$ to~$A$,
and then the ``escape trajectory'' that does not touch~$A$
anymore. That escape trajectory has the law of~$\s^{(A)}$,
which implies the preceding claim.

%

\subsection{Coupling with the conditioned Brownian motion}
\label{s_coupl_BM}
As we mentioned above,
in~\cite{CC22}, the continuous analogue of the process~$\s$ was 
studied and sharp bounds on the rate of escape were obtained.

Let $W=(W_t)_{t\geq 0}$ be the Brownian motion on~$\R^2$ with covariance matrix $\frac12 I_2$, or equivalently\footnote{The matrix $I_2:=\begin{pmatrix}
1 & 0\\
0 & 1
\end{pmatrix}$ is the identity matrix of size 2.}, the image of the standard Brownian motion on $\R^2$ by the time change $t\mapsto \frac{t}2$. 
For any $\rho\in(0, \infty)$, 
the function $x \mapsto \ln (|x|/\rho)$ 
vanishes on $\partial B(0, \rho)$ and is a scale function for~$W$.
The standard Brownian motion on $\R^2$ \emph{conditioned not 
to visit $B(0,\rho)$} is a diffusion on $\R^2\setminus B(0,\rho)$,
denoted $\hW^\rho=(\hW^\rho_t)_{t\geq 0}$, which can be defined 
e.g.\ via its transition kernel~${\hat p}_\rho$:
for $\|x\|>\rho, \|y\|\geq \rho$, 
\begin{equation}
\label{df_hat_p}
 {\hat p}_\rho(t,x,y) 
  = p_0(t,x,y)\frac{\ln (|y|/\rho)}{\ln(|x|/\rho)} ,
\end{equation}
where~$p_0$ denotes
the transition subprobability density of~$W$ killed on hitting
 the disk~$\B(0,\rho)$. 
We refer to~\cite{CP20} for a more detailed discussion. 

Process $\hW^\rho$ is rotationally invariant, 
its norm is itself a one-dimensional diffusion, which exhibits
interesting renewal properties.
This rotational invariance is the key difference from 
the discrete case, which makes it possible to obtain
sharp results on the rate of escape
by only studying that one-dimensional diffusion,
as was done in~\cite{CC22}.
Denoting by
\[
M^\rho(t)=\inf_{s\geq t} |\hW^\rho_s|,\qquad t\ge 0,
\]
the future minima process of $|\hW^\rho|$, 
we derive from the main results of~\cite{CC22} the following

\begin{theo} 
\label{th-CC22}
Let $\rho\in(0, \infty)$. Independently of the starting point 
of $\hW^\rho$ in $\R^2\setminus B(0,\rho)$, 
the two following results hold. 
\begin{itemize}
\item For $g: \R_+ \to \R_+$ non-increasing such that 
$t\mapsto (\ln t) g(\ln \ln  t)$ is non-decreasing, 
we have:
\[
\IP\big[M^\rho(t) \leq e^{(\ln t) g(\ln \ln t)}\text{ i.o.}\big]
= 0 {\rm\ or\ } 1 
\text{ according to } \int^\8 g(u) \, \mathrm{d}u < \8  
{\rm\ or\ }= \8.
\]
 (Here, ``i.o.'' means ``for an unbounded set of $t$'s''.)
\item The limit 
\[
   K=\limsup_{t \to \8} \frac{M^\rho(t)}{\sqrt{t \ln \ln \ln  t}} 
\]
exists, is a.s.\ constant and is non-degenerate: $0<K<\8$. The value of $K$ is independent of $\rho$.
\end{itemize}
\end{theo}

\begin{proof}
The object of study of \cite{CC22} was the \emph{standard} two-dimensional Brownian motion conditioned on not hitting the unit ball $B(0, 1)$. Since the latter process and the process $\hW^1$ are related (in law) only by the time change $t\mapsto \frac{t}2$, it is immediate to derive the results for $\rho=1$ from Theorems 1.1 and 1.2 of \cite{CC22}. To be completely explicit, we note that the value of $K$ in the present theorem and the value of $K^*$ in Theorem 1.2 of \cite{CC22} have a ratio of $\sqrt{2}$, but we also recall that these values are not known. 

Furthermore, a consequence of scaling invariance of the Brownian motion is that for any $\rho\in(0, \infty)$, process~$\hW^\rho$ is equal in law to process $(\rho \hW^1_{t\rho^{-2}})_{t \ge 0}$.
Hence, Theorem~\ref{th-CC22} holds for all $\rho\in(0, \infty)$, and we see that the value of $K$ is independent of $\rho$.
\end{proof}

Next, denote 
$\rho_0=\exp(-\gamma_{\rm EM}-\frac32\ln 2)\approx 0.1985$ 
and write~$\hW$ for $\hW^{\rho_0}$. 
In order to prove Theorem~\ref{th-main}, 
we will construct a coupling of processes~$\hW$ and~$\s$. 
In Sections~\ref{sec-coupling} and~\ref{sec-couplingproof}, 
we are going to show the following
\begin{theo}
\label{th-couplingv1}
There exists a positive constant~$\alpha$ such that for all~$\eps>0$ and all starting points in $\Z^2\setminus \{0\}$ and $\R^2 \setminus B(0,\rho_0)$, there exists a coupling of~$\s$ 
and~$\hW$ started respectively at these points,
such that with probability not smaller than $1-\eps$:
\begin{equation}
\label{eq-thm-boundt}
\big| |\s_t|-|\hW_t| \big|\leq \ln^\alpha t \quad 
\text{ eventually, as } t\to \infty,
\end{equation}
where $(\s_t)_{t\geq 0}$ is the process defined 
by linear interpolation of the sequence $(\s_n)_{n\geq 0}$.
  
Also,
with probability not smaller than $1-\eps$:
\begin{equation}
\label{eq-thm-corsim}
   |\s_t| \sim |\hW_t|, \text{ as } t\to \infty.
\end{equation}
\end{theo}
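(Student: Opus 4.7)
The plan is to adapt the classical dyadic (KMT) embedding of simple random walk into Brownian motion to the $h$-transformed processes, focusing on the radial quantities since the theorem only compares norms.  The starting observation is that the choice $\rho_0=\exp(-\gamma_{\rm EM}-\tfrac{3}{2}\ln 2)$ makes \eqref{asympta} read $\tfrac{\pi}{2} a(x)=\ln(|x|/\rho_0)+O(|x|^{-2})$.  Hence the two $h$-functions that define the conditioned processes, $a(\cdot)$ for~$\s$ and $\ln(\cdot/\rho_0)$ for~$\hW$, are asymptotically equal up to the multiplicative constant $\pi/2$, which cancels in the transition ratios $h(y)/h(x)$.  This matching is precisely what makes a coupling tight to within $\ln^\alpha t$ plausible.

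The core step is a local coupling lemma: starting from $x,y$ with $|x|\asymp|y|=r$ both large, one constructs a joint realization of $\s$ (from~$x$) and $\hW$ (from~$y$) such that
\[
\sup_{0\le s\le T}\big||\s_s|-|\hW_s|\big|\ \le\ C\bigl(\ln T + \bigl||x|-|y|\bigr|\bigr)
\]
with probability at least $1-T^{-c}$, provided $T\le r^{2-\eta}$.  I would prove this in two stages.  First, the classical KMT coupling provides an analogous bound for the \emph{unconditioned} SRW~$S$ and BM~$W$ (with appropriately matched variance normalization).  Second, the coupling is transferred to $\s$ and $\hW$ via the Radon-Nikodym densities $a(S_T)/a(x)$ (relative to the SRW killed at~$0$) and $\ln(|W_T|/\rho_0)/\ln(|y|/\rho_0)$ (relative to the BM killed at $\partial B(0,\rho_0)$).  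On the good event that both trajectories stay in the annulus $\{r/2<|\cdot|<2r\}$ over $[0,T]$, these densities are bounded by absolute constants, and their asymptotic match via \eqref{asympta} adds only a polylogarithmic error to the unconditioned KMT estimate.

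The local lemma is then iterated on dyadic blocks $[2^k,2^{k+1}]$.  By \eqref{result_PRU} and its BM counterpart (Theorem~\ref{th-CC22}), at time $2^k$ both $|\s_{2^k}|$ and $|\hW_{2^k}|$ are of order $2^{k/2}$ up to slowly varying corrections, with probability close to one; thus the local lemma applies on each block with error $O(k^c)$.  A Borel-Cantelli argument, together with choosing the exceptional set of total probability at most~$\eps$, provides a successful coupling on all blocks simultaneously with cumulative error $\sum_{k\le \log_2 t} O(k^c) = O((\ln t)^{c+1})$, which gives \eqref{eq-thm-boundt} after relabeling $\alpha:=c+1$.  Statement \eqref{eq-thm-corsim} then follows immediately, since by Theorem~\ref{th-CC22} one has $|\hW_t|\gg \ln^\alpha t$ eventually, so $||\s_t|-|\hW_t||/|\hW_t|\to 0$ and hence $|\s_t|\sim|\hW_t|$.

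The main obstacle is the second stage of the local lemma, namely the passage from the unconditioned KMT to a coupling that respects the $h$-transformed laws.  Both $h$-transforms introduce a drift of order $1/(r\ln r)$, small but non-negligible over long time horizons, and the discrete and continuous versions of this drift do not coincide exactly.  Quantifying the mismatch via Taylor expansion of $a$ using \eqref{asympta} against the exact derivatives of $\ln(\cdot/\rho_0)$, and showing that its accumulated effect over a window of length $T$ is only polylogarithmic (rather than a power of $T$), is the delicate part.  Together with uniformly bounding the Radon-Nikodym densities on the good annulus event, this is the principal technical content of the proof.
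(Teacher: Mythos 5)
Your high-level plan (exploit the choice of $\rho_0$ to match the $h$-functions, run a KMT-type coupling, and show the accumulated mismatch is polylogarithmic) captures the right idea, but two steps in your sketch have real gaps and are handled very differently in the paper.

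First, the ``transfer via Radon--Nikodym densities'' step is not actually a valid coupling construction as stated. If $(S,W)$ are coupled under a measure $\mu$, reweighting $\mu$ by the product $\frac{a(S_T)}{a(x)}\cdot\frac{\ln(|W_T|/\rho_0)}{\ln(|y|/\rho_0)}$ does \emph{not} produce a measure whose first marginal is the law of $\s$ and whose second is the law of $\hW$: the first marginal of the reweighted measure has density $\frac{a(S_T)}{a(x)}\,\E_\mu\!\big[\tfrac{\ln(|W_T|/\rho_0)}{\ln(|y|/\rho_0)}\,\big|\,S\big]$, which is not $\frac{a(S_T)}{a(x)}$. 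You need a genuine mechanism — the paper's is an acceptance--rejection scheme driven by a single common sequence of uniforms — rather than a density reweighting of the joint law. The paper's construction (Sections 2--3) works at the level of spatial excursions between concentric circles $C_m$ of radius $\rho_0e^m$ and their discrete analogues $\Gamma_m$: each excursion is proposed from a KMT-coupled $(W,S)$ pair and accepted or rejected using a shared uniform, with acceptance thresholds $\pi_{m,y}$ and $1-\frac{2}{m+1}$ chosen so that \emph{both} level sequences become the same realization of the one-dimensional conditioned random walk. This exactness at the level-sequence layer is what makes the coupling clean; your time-block scheme would need an analogous explicit accept/reject step, and without the spatial excursion structure the acceptance ratios are functions of a continuous exit radius, which is messier.

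Second, your local lemma requires $T\le r^{2-\eta}$ (to keep the trajectories inside the annulus $\{r/2<|\cdot|<2r\}$ with high probability), but on the dyadic block $[2^k,2^{k+1}]$ you have $T\asymp 2^k$ while $r\asymp 2^{k/2}$, so $T\asymp r^2 \gg r^{2-\eta}$; the local lemma does not apply on whole dyadic blocks, and indeed over a window of length $r^2$ the process typically does leave that annulus. The paper avoids this by making the excursions \emph{spatial}: each excursion goes from level $m$ to level $m\pm 1$, lasts time $\Theta(r_m^2)$ but by construction stays in a fixed dyadic annulus, and the KMT control (Assumption~1) together with the hitting-time comparison (Assumption~3, via a Bessel SDE estimate) bounds the per-excursion time mismatch $|s_k-\sigma_k|$ by $m^\beta$. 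Summing over the $O(\ln^3 t)$ excursions that occur up to time $t$ then yields the $\ln^\alpha t$ bound. If you want to rescue your scheme you would have to subdivide each dyadic block into many sub-windows and control the re-initialization mismatch across sub-windows, at which point you are effectively reinventing the excursion decomposition.
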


\begin{rem}
We can be more precise about the value of constant~$\alpha$ 
in Theorem~\ref{th-couplingv1}. 
Indeed, its proof (which is to be found 
in Section~\ref{sec-couplingproof}) shows that any $\alpha>27$
works. However, we stress that we have no reason 
to believe that the above would have to be sharp.
\end{rem}

\begin{rem}\label{rem:rho01}
According to Theorem \ref{th-CC22}, all processes~$\hW^\rho$ have the same large-$t$ behaviour:  this was proved using the Brownian invariance. Due to transience, it is also possible to show this fact essentialy  in the spirit of Section~\ref{s_nothit_A}.
Thus the choice of $\rho_0=\exp(-\gamma_{\rm EM}-\frac32\ln 2)$ is mostly technical. In fact, it comes from the asymptotic expression~\eqref{asympta}: it is indeed the only choice which guarantees that
\begin{equation}
\label{why_rho_0}
\frac{\pi a(x)}{2}=\ln \frac{|x|}{\rho_0} + O(|x|^{-2}),
\end{equation}
as $|x|$ goes to infinity. 
%
%
%
We will see in the proofs that
$\hW=\hW^{\rho_0}$ is the ``right one''
for comparison with~$\s$, since it has almost the same 
transition probabilities between certain concentric ``levels'': this will be explained further in Remark~\ref{rem:rho02}, in Section~\ref{sec-coupling}.
\end{rem}

\section{Sampling $\hW$ and $\s$}
\label{s_sampling_WS}
As a first step towards the construction of the coupling, 
we describe how a realisation of~$\hW$ (respectively, $\s$) 
can be sampled using an i.i.d.\ family of $W$-trajectories (respectively $S$-trajectories).

\subsection{Notations}
For $m\geq 0$, we set $r_m= \rho_0 \, e^m$ and denote 
$C_m=\{x\in \R^2: |x|= r_m\}$ the circle of radius~$r_m$ 
centered at the origin. Consider a trajectory of~$\hW$ 
started at some point $x\in C_h$, 
where $h$ is some positive integer. 
It can be described in terms of excursions between 
the levels $(C_m, m\geq 1)$. 
Indeed, define the sequence of stopping times $(t_k)_{k\geq 0}$ 
and the \emph{level sequence} $(m_k)_{k\geq 0}$ such that:
$t_0 = 0, m_0=h$ and for $k\geq 0$,
\begin{align}
 t_{k+1} & = \inf \{t>t_k : \hW_{t} \in C_{m_k-1}\cup C_{m_k+1} \},\\
 m_{k+1} & = \ln (|\hW_{t_{k+1}}|/\rho_0).
\end{align}
Also, denote by $x_k=\hW_{t_k}, k\geq 0$ the hitting points
of the circles.

We need to be more careful when introducing 
 the discrete counterparts of the above notations.
 Let us define $\Gamma_0=\{0\}$, 
 $\Gamma_1=\{x\in\Z^2 : |x|=1\}$,
 $\Gamma_2=\{x\in\Z^2 : 1<|x|\leq 2\}$,
and, for $m\geq 3$ (note that $r_3\approx 3.9871$), 
\begin{equation*}
\Gamma_m=\{x\in \Z^2 \colon r_m-1<|x|\leq r_m\}.
\end{equation*}
Observe that these sets are disjoint and that, for all $m\geq 1$, any nearest-neighbour path between~$\Gamma_{m-1}$ and~$\Gamma_{m+1}$ has to pass through~$\Gamma_m$. We can then 
decompose any trajectory of $\s$ started from $\xi\in \Gamma_h$,
with some positive integer~$h$, into excursions between 
the levels $(\Gamma_m, m\geq 0)$. 
Define sequences $(\tau_k)_{k\geq 0}$ and $(\mu_k)_{k\geq 0}$ 
such that: $\tau_0 = 0, \mu_0=h$, and for $k\geq 0$,
\begin{align}
 \tau_{k+1} & = \inf \big\{n>\tau_k : 
  \s_n \in \Gamma_{\mu_k-1}\cup \Gamma_{\mu_k+1} \big\}, \\
 \mu_{k+1} & = 
  m \text{ on } \big\{\s_{\tau_{k+1}}\in\Gamma_m\big\}, 
  m=1,2,3,\ldots. 
\end{align}
Finally, denote by $\xi_k=\s_{\tau_k}, k\geq 0$ the hitting points 
of these ``discrete circles''.

For any $k$, 
$(\hW_t)_{t\in(t_k, t_{k+1}]}$
is called an \emph{excursion}
between \emph{levels}~$m_k$ and~$m_{k+1}$,
and~
$(\s_n)_{n\in (\tau_k, \tau_{k+1}]}$
is called 
an excursion between levels~$\mu_k$ and~$\mu_{k+1}$. 
By construction, the level sequences $(m_k)_{k\geq 0}$ and 
$(\mu_k)_{k\geq 0}$ have increments in $\{-1, +1\}$, 
and by transience of~$\hW$ and~$\s$ these sequences tend to infinity.

\subsection{Constructing $\hW$ from $W$}
\label{constrwW}
Take a positive integer~$m$ and any point~$x$ in~$C_m$. 
We consider the first excursion of~$\hW$ started from level~$m$, 
at point~$x$. By definition of~$\hW$ 
(recall Section~\ref{s_coupl_BM}),
we have
\begin{equation}
\label{eq-defwW}
 \mathbb{P}_x\big[(\hW_t)_{0\leq t\leq t_1} \in \,\cdot\, 
  , \hW_{t_1}\in dy\big]
=\frac{\ln (|y|/\rho_0)}{\ln(|x|/\rho_0)}
\, \mathbb{P}_x\big[\,(W_t)_{0\leq t\leq t_1} 
\in \cdot\, , W_{t_1}\in dy\big]
\end{equation}
(defining $t_1$ for process~$W$ in the same way 
 as for process~$\hW$).

On one hand, using that~$W$ has scaling function $h:x\mapsto \ln |x|$,
we have that 
$\mathbb{P}_x\big[W_{t_1} \in C_{m-1}\big]=\mathbb{P}_x\big[W_{t_1} \in C_{m+1}\big]=\frac12$, 
hence~\eqref{eq-defwW} implies that
\begin{align}
 \mathbb{P}_x\big[\hW_{t_1}\in C_{m-1}\big] 
              & =\frac{m-1}{2m},\\
 \mathbb{P}_x\big[\hW_{t_1}\in C_{m+1}\big] 
              & =\frac{m+1}{2m}.
\end{align}
Applying the strong Markov property, we see that the random sequence 
$(m_k)_{k\geq 0}$ follows the law of 
a Markov chain on~$\N$ with transition probabilities 
$\frac{m-1}{2m}$ to the left and~$\frac{m+1}{2m}$ to the right.
In the sequel, we will refer to this Markov chain
as the \emph{conditioned simple random walk in one dimension},
or 1-CSRW for short. The reason for this name is that
it is indeed the simple random walk on~$\Z$ 
conditioned to stay positive,
because it is obtained by applying 
the Doob's transform (with respect to $x\mapsto |x|$) 
to the simple random walk on~$\Z$. 
It can be interpreted, once again, as the limit 
of the simple random walk (started in the positive domain) 
conditioned to visit site~$N$ before the origin, 
as~$N$ goes to infinity, see Section~4.1 of~\cite{P21}. 
We remark also that, as 1-CSRW has the drift of order~$x^{-1}$
at~$x$, it belongs to the class of 
\emph{Lamperti processes}, cf.\ Chapter~3 of~\cite{MPW16}.

On the other hand, \eqref{eq-defwW} implies that, conditionally 
on the sequence $(m_k)_{k\geq 0}$, the excursions of~$\hW$ 
are simply Brownian excursions conditioned on exiting 
the annuli via the corresponding (inner or outer) boundaries. 

Therefore, one way to sample $\hW$ would be to sample the level
sequence $(m_k)_{k\geq 0}$ as a realisation of 
1-CSRW,
and then to sample excursions of~$W$ conditioned on 
hitting the inner or outer circle first, 
according to the level sequence. 
However, we prefer to use the construction described below, 
which has the virtue of avoiding this conditioning of excursions. 
The procedure we use is the usual acceptance-rejection technique 
of generating random variables.

To construct $(\hW_t)_{t\geq0}$, we need the following ingredients:
\begin{itemize}
\item a collection of i.i.d.\ stochastic processes
 $(\tW^{(k,\ell)}_\cdot, k,\ell \geq 0)$,
 which are all two-dimensional Brownian motions with covariance matrix $\frac12 I_2$
 (started at the origin).
\item a collection of i.i.d.\ random variables
 $(U_{k,\ell}, k,\ell \geq 0)$ with Uniform$[0,1]$
 distribution;
\end{itemize}

Suppose that, initially, the process is at $x \in C_h$,
with some positive integer~$h$. 
Now, what we aim to construct is a sequence of excursions 
(as defined above), i.e., a sequence of time moments
$(t_k, k\geq 0)$ with $t_0=0$, and a sequence of integers 
$(m_k, k\geq 0)$ with $m_0=h$ such that 
$\hW_{t_k}\in C_{m_k}$
and such that on 
the interval~$(t_{k},t_{k+1}]$, $\hW$ travels
between the levels $m_k$ and $m_{k+1}$.

So, assume that we have constructed the process
up to time~$t_{k}$ and that $x_k=\hW_{t_{k}}\in C_{m_k}$.
We consider successively for all $\ell$, the trajectory 
$x_k+\tW^{(k+1,\ell)}_\cdot$, 
stopped when it first hits~$C_{m_k-1}\cup C_{m_k+1}$, 
say at time $s_{k+1}^{(\ell)}$.
\begin{itemize}
\item If the trajectory first hits $C_{m_k+1}$, 
 we keep it: we set 
 $m_{k+1}=m_k+1$, $t_{k+1}=t_k+s_{k+1}^{(\ell)}$ 
 and 
 $\hW_{\tau_k+t}=\hW_{\tau_{k}}+\tW^{(k+1,l)}_t$ 
 for 
 $0< t \leq s_{k+1}^{(\ell)}$.
\item If the trajectory first hits $C_{m_k-1}$, 
 \begin{itemize}
 \item provided $U_{k+1,\ell}\leq 1-\frac{2}{m_k+1}$, 
    we keep it: we set $m_{k+1}=m_k-1$, 
    $t_{k+1}=t_k+s_{k+1}^{(\ell)}$,
    and 
   $\hW_{t_k+t}=x_k+\tW^{(k+1,\ell)}_t$ 
   for $0<t \leq s_{k+1}^{(\ell)}$;
 \item otherwise, we discard it, and restart the procedure 
  using the trajectory $\tW^{(k+1,\ell+1)}_\cdot$ 
  together with the next variable $U_{k+1, \ell+1}$.
 \end{itemize}
\end{itemize}
Once an excursion has been accepted (which, a.s., happens eventually),
we stop the iteration (in $\ell$) and construct the next excursion
starting from $x_{k+1}=\hW_{t_{k+1}} \in C_{m_{k+1}}$.

This construction relies on two key observations:
\begin{itemize}
 \item the sequence of jumps $(m_k, k\geq 0)$ 
 follows the law of 
 1-CSRW.
 Indeed, assume that $m_k=m$ and denote by~$p_m$ 
 (respectively,~$q_m$) the probability that the process 
 we constructed jumps to~$m+1$ (respectively, to~$m-1$). 
 As already observed, using the harmonicity of
$x\mapsto \ln (|x|/\rho_0)$, 
 for each~$\ell$, the trajectory $x_k+\tW^{(k+1,\ell)}_\cdot$ 
 hits~$C_{m+1}$ before~$C_{m-1}$ 
 with probability~$1/2$. 
 Therefore, $p_m$ and~$q_m$ are proportional 
 to, respectively,~$\frac12$ and~$\frac12 (1-\frac{2}{m+1})$. 
 Using the equality $p_m+q_m=1$, 
 we obtain the correct probabilities. 
 \item by construction, the excursions are excursions 
 of the Brownian motion~$W$, as required.
\end{itemize}

\subsection{Constructing $\s$ from $S$}
We are going to present a similar construction for process~$\s$, 
but let us first investigate the decomposition into excursions,
as we did for the continuous process.
Take a positive integer~$m$ and any point~$x$ in~$\Gamma_m$. 
We consider the first excursion of~$\s$ started from level~$m$, 
at point~$x$. 
By definition of~$\s$ as the Doob's transform of~$S$, we have: 
\begin{equation}
\label{eq-defs}
\mathbb{P}_x\big[(\s_n)_{0\leq n\leq \tau_1} \in \,\cdot\, 
  , \s_{\tau_1}=y\big]
= \frac{a(y)}{a(x)}
\, \mathbb{P}_x[\,(S_n)_{0\leq n\leq \tau_1} 
  \in \cdot\, , S_{\tau_1}=y]
\end{equation}
(defining $\tau_1$ for process~$S$ in the same manner 
as for process~$\s$).
In other terms, the law of~$\s_{\tau_1}$ is given by:
\begin{equation}
\label{eq-hitpoints}
\mathbb{P}_x\big[\s_{\tau_1}=y\big] 
 =\frac{a(y)}{a(x)}
   \, \mathbb{P}_x[S_{\tau_1}=y]\, ,
\end{equation}
and, conditionally on $\{\s_{\tau_1}=y\}$, 
$(\s_n)_{0\leq n\leq \tau_1}$ is an excursion of~$S$ 
started from~$x$ and conditioned to 
hit $\Gamma_{m-1}\cup \Gamma_{m+1}$ at~$y$.

To sample a realisation of~$\s$ started at some initial 
point~$\xi\in\Gamma_h$, with some positive integer~$h$, 
we use the following ingredients:
\begin{itemize}
 \item a collection of i.i.d.\ stochastic processes
 $(\tS^{(k,\ell)}_\cdot, k,\ell \geq 0)$,
 which are two-dimensional simple random walks 
 (started at the origin).
 \item a collection of i.i.d.\ random variables
 $(U_{k,\ell}, k,\ell \geq 0)$ with Uniform$[0,1]$
 distribution.\footnote{We use the same notation as in
Section~\ref{constrwW} for these random variables
because they are actually the same random variables; i.e.,
we will couple~$\s$ and~$\hW$ by using the same collection
of~$U$'s.}
\end{itemize}
We will also use the following notation: 
for $m\geq 1$ and $y \in \Gamma_{m-1}\cup \Gamma_{m+1}$, 
define
\begin{equation}
\label{eq-defpi}
\pi_{m,y}
=\frac{a(y)}{\max_{z\in\Gamma_{m-1}\cup\Gamma_{m+1}} a(z)},
\end{equation}
which is a real number in $[0,1]$.

We proceed as before, i.e., we construct recursively a sequence 
of stopping times
$(\tau_k, k\geq 0)$ with~$\tau_0=0$, 
and a sequence of integers $(\mu_k, k\geq 0)$ with~$\mu_0=h$ 
such that 
$\s_{\tau_k}\in \Gamma_{\mu_k}$
and such that on 
the interval~$(\tau_{k},\tau_{k+1}]$, $\s$ travels 
between the two levels $\Gamma_{\mu_{k}}$ and $\Gamma_{\mu_{k+1}}$. 
Recall that the sequence of hitting points is denoted by 
$(\xi_k)_{k\geq 0}$, so, in particular, $\xi_0=\xi$.

Now, assume that we have constructed the process
up to time~$\tau_{k}$ 
and that $\xi_k=\s_{\tau_{k}}\in \Gamma_{\mu_k}$.
Successively, for all values of~$\ell$, 
we consider the trajectory $\xi_k+\tS^{(k+1,\ell)}_\cdot$, 
stopped when it first hits~$\Gamma_{\mu_k-1}\cup \Gamma_{\mu_k+1}$, 
say at (integer) time $\sigma_{k+1}^{(\ell)}$, 
and denote the hitting point by $\xi_{k+1}^{(\ell)}$.
Then we do the following:
\begin{itemize}
 \item if $U_{k+1,\ell}\leq \pi_{m_k,  \xi_{k+1}^{(\ell)}}$, 
 we keep the trajectory: 
 we set $\xi_{k+1}=\xi_{k+1}^{(\ell)}$, $\mu_{k+1}=\mu_k+1$ 
 or $\mu_k-1$ according to $\xi_{k+1}^{(\ell)}$, 
 $\tau_{k+1}=\tau_k+\sigma_{k+1}^{(\ell)}$ 
 and 
 $\s_{\tau_k+n}=\xi_k+\tS^{(k+1,\ell)}_n$ 
 for $n=1, \dots, \sigma_{k+1}^{(\ell)}$.
 \item otherwise, we discard it, and restart the procedure 
 using the next trajectory $\tS^{(k+1,\ell+1)}_\cdot$ 
 and the next variable $U_{k+1, \ell+1}$.
\end{itemize}
Once an excursion has been accepted 
(which eventually happens a.s.), 
we stop the iteration (in~$\ell$) 
and construct the next excursion starting from 
$\xi_{k+1}=\s_{\tau_{k+1}} \in \Gamma_{\mu_{k+1}}$.

The construction we described indeed gives rise
to process~$\s$ since
\begin{itemize}
\item the sequence of hitting points $(\xi_k)_{k\geq 0}$ 
obeys the law described in~\eqref{eq-hitpoints}. 
Indeed, assume that $\xi_k=x$;
then the probability that the process we constructed satisfies 
$\xi_{k+1}=y$ is proportional to 
$\pi_{m, y} \mathbb{P}_x[S_{\tau_1}=y]$, 
hence it is equal to 
$\frac{a(y)}{a(x)} \, \mathbb{P}_x[S_{\tau_1}=y]$;
\item by construction, the excursions are SRW's excursions,
as required.
\end{itemize}

\section{Construction of the coupling}
\label{sec-coupling}
In this section, we will present the construction of 
the coupling mentioned in Theorem~\ref{th-couplingv1}. 
But first, let us make an observation on the range 
of initial points we have to handle. 
Fix some $\eps \in (0,1)$, some $x\in \R^2\setminus B(0,\rho_0)$,
and some $\xi \in \Z^2 \setminus \{0\}$. 
Assume that we want to build the coupling 
with~$\hW$ started from~$x$ and~$\s$ started from~$\xi$. 
Choose some (large)~$h$ such that $r_h \geq |x|\vee |\xi|$, 
then let~$\s$ and~$\hW$ evolve independently 
under their respective laws until times
\begin{align}
t_0 & =\inf\{t\geq 0: \hW_t\in C_h\},\\
\tau_0 & =\inf\{n\geq 0 : \s_n \in \Gamma_h\}.
\end{align}
Then, we observe that
\begin{align}
\big||\s_t|-|\hW_t|\big|
& \leq \big||\s_t|-|\s_{t+\tau_0-t_0}|\big|
 +\big||\s_{\tau_0+t-t_0}|-|\hW_t|\big| \nonumber \\
& \leq |\tau_0-t_0|+\big||\s_{\tau_0+t-t_0}|-|\hW_t|\big|,
\label{eq-initialpoints}
\end{align}
where we have used in the last inequality that~$\s$ 
makes jumps of size~$1$.
Now, if we manage to construct~$\s$ and~$\hW$ 
after times~$\tau_0$ and~$t_0$ such that 
with probability not smaller than $1-\eps$
\begin{equation}
\label{eq-afterCm}
\big||\s_{\tau_0+t}|-|\hW_{t_0+t}|\big|\leq \ln^\alpha t 
\quad \text{eventually},
\end{equation}
then we will have eventually, plugging this bound 
into~\eqref{eq-initialpoints}, that
\begin{equation}
\big||\s_t|-|\hW_t|\big|\leq |\tau_0-t_0|
  + \ln^\alpha (t-\tau_0),
\end{equation}
which, for any $\alpha'>\alpha$, is eventually not larger 
than $\ln^{\alpha'} t$.

Hence we are left with the task of obtaining~\eqref{eq-afterCm},
which means constructing the coupling 
starting from any prescribed points~$\xi\in \Gamma_h$ 
and~$x \in C_h$, with~$h$ chosen as large as we desire.

In~\cite{KMT75}, J.~Koml{\' o}s, P.~Major, and G.~Tusn{\' a}dy 
proved the existence of a coupling between the one-dimensional simple random 
walk, say $(X_n)_{n\geq 0}$,
and the one-dimensional Brownian motion, say $(B_t)_{t\geq 0}$, 
both started at the origin, and positive constants~$C', K', \lambda'$ 
such that for all $n\in\N$ and~$x>0$
\begin{equation}
\label{eq-KMT1D}
\IP\big[\max_{k=1, \dots, n} |X_k-B_k|\geq C' \ln n + x\big]
 \leq K' e^{-\lambda' x}.
\end{equation}
We refer to Chapter~7 of~\cite{LL10} 
for a modern introduction to this coupling, 
which is known under the denomination of \emph{dyadic coupling}, 
\emph{KMT approximation}, or \emph{Hungarian embedding}, and in particular to Section 7.6 of~\cite{LL10} for an extension to the two-dimensional case. Indeed, using a classical representation of the two-dimensional simple random walk $(S_n)_{n\geq 0}$ as a function of two independent one-dimensional simple random walks, it is possible to build a coupling of the processes  $(S_n)_{n\geq 0}$ and $(W_t)_{t\geq 0}$ such that:
\begin{equation}
\label{eq-KMT}
\IP\big[\max_{k=1, \dots, n} |S_k-W_k|\geq C \ln n + x\big]
 \leq K e^{-\lambda x}.
\end{equation}
with some positive constants $C, K, \lambda$. To illustrate the strength of the KMT coupling, let us underline that, using the Borel-Cantelli lemma, 
it is immediate to get that for any $\eps>0$, 
almost surely, $|S_n-W_n|\leq (C+1/\lambda+\eps)\ln n$ 
eventually. 
Using this two-dimensional KMT coupling, 
we are going to construct a 
coupling between~$\s$, 
the simple random walk conditioned on not hitting the origin, 
and $\hW$, 
the Brownian motion (with covariance matrix $\frac12 I_2$) conditioned on not hitting 
the ball $B(0,\rho_0)$. 

Our aim is to achieve in parallel the constructions 
of~$\hW$ and~$\s$ described in previous section, 
by using KMT-coupled trajectories of~$W$ and~$S$, 
and common variables $(U_{k,\ell}, k,\ell \geq 0)$, 
in such a way that the moduli stay close.

\begin{rem}\label{rem:rho02}
Following Remark \ref{rem:rho01}, we stress that $\rho_0$ has been chosen in such a way that the acceptance-rejectance thresholds in the constructions of $\hW$ and $\s$ (almost) coincide. Indeed, recalling \eqref{why_rho_0}, we have :
\begin{equation*}
\sup_{\xi\in\Gamma_{m+1}} 1-\pi_{m, \xi}\!
=\!O\!\left(\!\frac1{m e^m}\!\right) \quad 
\text{and} \quad \sup_{\xi\in\Gamma_{m-1}} 
\left|\left(1-\frac2{m+1}\right)-\pi_{m, \xi}\right|
\!=\!O\!\left(\!\frac1{m e^m}\!\right),
\end{equation*} 
while for a different choice of $\rho_0$, the second supremum would instead be only $O\left(\frac1{m^2}\right)$.
\end{rem}

To construct the coupling, we need the following ingredients:
\begin{itemize}
\item a collection of i.i.d.\ pairs of stochastic processes
 $\big((\tW^{(k,\ell)}_\cdot,\tS^{(k,\ell)}_\cdot), 
   k,\ell \geq 0\big)$,
where~$\tW$ is a two-dimensional Brownian motion with covariance matrix $\frac12 I_2$
(started at the origin),
 $\tS$ is a two-dimensional SRW (also started at the origin), and 
 they are KMT-coupled (within each pair);
\item a collection of i.i.d.\ random variables
 $(U_{k,\ell}, k,\ell \geq 0)$ with Uniform$[0,1]$
 distribution.
\end{itemize}

Suppose that, initially, both processes are at level~$h$, 
with~$h$ large, 
i.e., $x_0=\hW_0 \in C_h$ and $\xi_0=\s_0\in \Gamma_h$.
What we intend to construct are two sequences of time moments
$(t_k, \tau_k, k\geq 0)$, such that 
$\hW_{t_k}\in C_{m_k}, \s_{\tau_k} \in \Gamma_{\mu_k}$, 
and with probability not smaller than $1-\eps$
the following happens:
for all $k\geq 0$, $\hW$ on 
the interval~$(t_k,t_{k+1}]$ and~$\s$ on the 
interval~$(\tau_k,\tau_{k+1}]$ ``travel together''
between the same levels (i.e., the level sequences~$m$ and~$\mu$
coincide) and the difference $|t_k-\tau_k|$
does not grow too much.

So, assume that we have constructed both processes
up to~$t_k$ and~$\tau_k$, and both 
$x_k=\tW_{t_k}$ and~$\xi_k=\s_{\tau_k}$ belong to~$C_{m_k}$ 
and, respectively, to $\Gamma_{m_k}$.

We now intend to simultaneously construct~$\hW$ on 
the interval~$(t_k,t_{k+1}]$ and~$\s$ on the 
interval~$(\tau_k, \tau_{k+1}]$; we will do that 
using the $U$'s and the $(\tW,\tS)$'s with the first
index~$k$. We successively try $\ell=1,2,3,\ldots$,
and each step will be a \emph{success} (in which case
we have what we wanted, and can pass from~$k$ to~$k+1$), 
a \emph{failure}
(this means that the candidate excursions were discarded, 
so we pass to the next~$\ell$ and try to obtain a success),
or a \emph{catastrophe} 
(which means that the processes have really 
decoupled).
A technical operation is needed, 
which is to rotate the trajectory $\tW^{(k+1,\ell)}_\cdot$. 
Take~$\theta_k$ some angle so that~$\gamma_{\theta_k}$, 
the rotation by angle~$\theta_k$, 
sends~$x_k$ near~$\xi_k$; 
more precisely, we require that~$|\xi_k-\gamma_{\theta_k}(x_k)|\leq 1$. 
Denote by~$s_{k+1}^{(\ell)}$ the hitting time
of~$C_{m-1}\cup C_{m+1}$
by $x_k+r_{-\theta_k}\big(\tW^{(k+1,\ell)}_\cdot\big)$,
and by~$\sigma_k^{(\ell)}$ the hitting time
of~$\Gamma_{m-1}\cup \Gamma_{m+1}$
by $\xi_k+\tS^{(k+1,\ell)}_\cdot$,
and let $x_{k+1}^{(\ell)}$ and $\xi_{k+1}^{(\ell)}$ 
be the corresponding hitting points.
At the~$\ell$-th try, we declare it to be a 
\begin{itemize}
 \item \emph{success}, if the four following assumptions hold
 \begin{itemize}
  \item (Assumption~1) the KMT coupling event occurred 
up to time~$r_m^3$, and $\tW^{(k+1,\ell)}$ 
is ``controlled'' between integer times, i.e.,
  \begin{equation}
    \max_{n=1, \dots, r_m^3} 
     \max_{n\leq t\leq n+1}
      |\tS^{(k+1,\ell)}_n-\tW^{(k+1,\ell)}_t|\leq D m,
   \end{equation}
   with some $D>0$ to be chosen later;
  \item (Assumption~2) 
  $s_{k+1}^{(\ell)} , \sigma_{k+1}^{(\ell)} \in [r_m, r_m ^3]$;
  \item (Assumption~3) 
  $|s_{k+1}^{(\ell)}-\sigma_{k+1}^{(\ell)}| \leq m^\beta$ 
  with some $\beta>0$ to be chosen later;
  \item (Assumption~4a) 
  one of these two conditions applies 
  (recall the definition of~$\pi_{m, y}$ 
  in~\eqref{eq-defpi}):
   \begin{itemize}
    \item 
     $x_{k+1}^{(\ell)} \in C_{m+1}$, $\xi_{k+1}^{(\ell)} 
      \in \Gamma_{m+1}$,
     and 
     $U_{k+1,\ell}\leq \pi_{m_k,\xi_{k+1}^{(\ell)}}$
     (``both went outside and we decided to keep both'');
    \item 
     $x_{k+1}^{(\ell)}\in C_{m-1}$,
     $\xi_{k+1}^{(\ell)} \in \Gamma_{m-1}$
     and 
     $U_{k+1,\ell} \leq \pi_{m_k,\xi_{k+1}^{(\ell)}} 
      \wedge (1-\frac{2}{m+1})$
     (``both went inside and we decided to keep both'');
    \end{itemize}
 \end{itemize}
 \item \emph{failure}, if the above Assumptions~1, 2 and~3 
 hold and the following assumption holds:
     \begin{itemize}
     \item (Assumption~4b) 
      $x_{k+1}^{(\ell)}\in C_{m-1}$,
     $\xi_{k+1}^{(\ell)} \in \Gamma_{m-1}$
     and 
     $U_{k,\ell} \geq \pi_{m_k,\xi_k} \vee (1-\frac{2}{m+1})$
	(``both went inside and we decided to discard both'');
	\end{itemize}
 \item \emph{catastrophe}, otherwise.
\end{itemize}

We proceed as follows. 
In case of failure, we repeat the procedure with the next~$\ell$, 
until we meet a success or a catastrophe. 
In case of success, we declare the sampled excursions
to be the next excursions of our processes~$\hW$ and~$\s$, 
i.e., we set $m_{k+1}=\mu_{k+1}=\ln |x_{k+1}^{(\ell)}|$;
then on one hand,
\[
t_{k+1}=t_k+s_{k+1}^{(\ell)}, \quad 
\text{and} \quad 
\hW_{t_k+t}=x_k+r_{-\theta_k}\left(\tW^{(k+1,\ell)}_t\right), 
\quad \text{for } 0<t \leq s_{k+1}^{(\ell)};
\]
and, on the other hand,
\[
\tau_{k+1}=\tau_k+\sigma_{k+1}^{(\ell)}, \quad 
\text{and} \quad \s_{\tau_k+n}=\xi_k+\tS^{(k+1,\ell)}_n, 
\quad \text{for } n=1, \dots, \sigma_{k+1}^{(\ell)}.
\]
Finally, we denote the exiting points by $x_{k+1}=x_{k+1}^{(\ell)}$
and $\xi_{k+1}=\xi_{k+1}^{(\ell)}$
before moving on to the next step of the coupling 
(constructing the ($k+2$)-th excursions).
Instead, if a catastrophe occurs, we abandon the procedure and 
let both processes~$\hW$ and~$\s$ evolve independently 
(according to their respective laws)
forever after times~$t_k$ and~$\tau_k$. 
However, for formal reasons 
we still define level sequences $(m_k)_{k\ge0}$ 
and $(\mu_k)_{k\ge0}$: almost surely, they will cease 
to coincide at some point.

\section{Proof of Theorem \ref{th-couplingv1}}
\label{sec-couplingproof}
In this section, we will prove that the above-described 
coupling satisfies statements~\eqref{eq-thm-boundt} 
and~\eqref{eq-thm-corsim} of Theorem~\ref{th-couplingv1}. 
We begin with two lemmas which control the probability of 
an occurrence of a catastrophe.

First, we define and estimate the probability of an occurrence of 
a catastrophe in one step of the coupling.
\begin{df}
For $(x, \xi) \in C_m\times \Gamma_m$, 
denote by $\psi_{x,\xi}$ 
the probability that a catastrophe occurs
during the construction of the first excursion 
of the coupling started at $(x, \xi)$. We set 
\[
p_m=\max_{(x, \xi) \in C_m\times \Gamma_m} \psi_{x,\xi}.
\]
\end{df}
Next, recall the above Assumptions~1 and~3, where we left the choice of $D$ and $\beta$ to be made latter. We have
\begin{lem}
\label{lem-sumfinite}
If~$D$ and~$\beta$ are large enough, then
\begin{equation}
\label{eq-sumpm}
\sum_m m^2 p_m < \infty.
\end{equation}
\end{lem}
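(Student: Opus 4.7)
Since the triples $(\tW^{(k+1,\ell)},\tS^{(k+1,\ell)},U_{k+1,\ell})$ are i.i.d.\ over $\ell$, writing $q_c,q_s,q_f$ for the per-attempt catastrophe, success and failure probabilities yields
$$
\psi_{x,\xi}=\sum_{j\ge 0}q_f^{\,j}q_c=\frac{q_c}{q_c+q_s}\le\frac{q_c}{q_s}.
$$
I would first show that $q_s\ge c_0>0$ uniformly in $m\ge m_0$. Conditionally on Assumptions~1--3, one of the alternatives in Assumption~4a fires with asymptotic probability at least $\tfrac14$: by~\eqref{asympta} one has $\pi_{m,y}=1+O(e^{-m}/m)$ on the outer level and $\pi_{m,y}=\tfrac{m-1}{m+1}+O(e^{-m}/m)$ on the inner one, while each of $\tW,\tS$ exits outward with probability $\tfrac12+O(m^{-1})$. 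Combined with the $1-o(1)$ bound on $\IP[\text{Assumptions 1--3 hold}]$ from the estimates below, this gives $q_s\ge\tfrac18$ for $m\ge m_0$, and it remains to show that $\sum_m m^2 q_c<\infty$.

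Next I would decompose the per-attempt catastrophe as (i)~Assumption~1 fails; (ii)~Assumption~1 holds but Assumption~2 fails; (iii)~Assumptions~1--2 hold but Assumption~3 fails; (iv)~Assumptions~1--3 hold but Assumption~4 is inconsistent. For (i), applying~\eqref{eq-KMT} with $n=r_m^3$ and $x=(D-3C)m$ gives $Ke^{-\lambda(D-3C)m}$ provided $D>3C$; the between-integer Brownian oscillation is controlled by Gaussian reflection and a union bound over at most $r_m^3$ unit intervals, yielding an extra $r_m^3\cdot 4e^{-D^2m^2/8}$ term, again exponentially small in $m$. For (ii), standard exit-time estimates for BM and SRW from an annulus of width $\Theta(r_m)$ give exponential decay both for ``exit before time $r_m$'' and ``exit after time $r_m^3$''.

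The delicate steps are~(iii) and~(iv). For (iii), on Assumption~1 one has $|\tS_n-\tW_n|\le Dm\ll r_m$ throughout $n\le r_m^3$. By symmetry assume $s\le\sigma$; then at time $s$ the BM has just hit, say, $C_{m+1}$, and $|\tS_s|\in[r_{m+1}-Dm,\,r_{m+1}-1]$, so $\tS$ is in a thin strip of width $O(Dm)$ just inside $\Gamma_{m+1}$. A one-sided escape estimate for the radial coordinate (locally a 1D BM, since the Bessel drift $1/(2|\tS|)$ is $O(e^{-m})$ near level $m$) then bounds $\IP[\sigma-s>m^\beta]$ by $O(Dm/m^{\beta/2})=O(m^{1-\beta/2})$, so $\beta>8$ suffices. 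For (iv), the same KMT control forces $\tS$ and $\tW$ to exit through corresponding circles (since $Dm\ll r_m$ rules out $\Gamma_{m-1}$ versus $\Gamma_{m+1}$ confusion), so only the $U$-variable can cause inconsistency; by~\eqref{asympta} each bad $U$-interval has length $O(e^{-m}/m)$, giving an exponentially small contribution.

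Combining (i)--(iv) yields $q_c=O(m^{1-\beta/2})$ for $m\ge m_0$ (with $\beta>8$ and $D>3C$), hence $p_m=O(m^{1-\beta/2})$, which sums against~$m^2$. The main obstacle is~(iii): unlike (i), (ii), (iv), its bound is only polynomial in $m^{-1}$, so it dictates how large $\beta$ must be chosen, and it is the only step that uses more than KMT plus standard annular exit-time estimates, namely a one-sided escape estimate for the radial coordinate of the SRW near the target level.
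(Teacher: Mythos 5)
Your overall scheme matches the paper's: bound the per-attempt catastrophe probability, reduce the iteration to a geometric series (the paper simply observes that a failure occurs with probability at most $\tfrac12$ per try, rather than lower-bounding $q_s$, but the effect is the same), and then split the per-attempt catastrophe according to which Assumption breaks. Steps (i), (ii) and (iv) are carried out exactly as in the paper: KMT plus Gaussian oscillation for Assumption~1, the two exit-time tail bounds for Assumption~2, and the asymptotics~\eqref{asympta} of the potential kernel, which make the bad $U$-intervals of size $O(e^{-m}/m)$, for Assumption~4. You also correctly identify (iii) as the bottleneck, and arrive at the same threshold $\beta>8$.

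The genuine difference — and the place where your version has a gap — is step~(iii). You propose to condition on the first of $s,\sigma$ (say $s$), observe that $\tS_s$ then lies within $O(Dm)$ of the exit circle, and bound $\IP[\sigma-s>m^\beta]$ by ``a one-sided escape estimate for the radial coordinate of the SRW.'' Two issues. First, the radial coordinate of the planar SRW is not a one-dimensional Markov chain, so the escape estimate you want is not an off-the-shelf result; asserting that it is ``locally a 1D BM'' because the Bessel drift $1/(2|x|)$ is $O(e^{-m})$ is a heuristic that would need to be justified (e.g.\ by projecting onto the radial direction at time $s$). Second and more seriously, $s$ is a stopping time for $\tW$ but not for the coupled pair $(\tW,\tS)$: conditioning on the KMT event and on $\tW_{[0,s]}$ does not leave $\tS_{s+\cdot}-\tS_s$ as a fresh SRW, so the escape estimate cannot be applied directly. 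The paper avoids both problems by never touching $\tS$ after time $s$: it introduces the deterministic hitting times $T_-=T_{r_{m+1}-Dm-1}$ and $T_+=T_{r_{m+1}+Dm}$ of the \emph{Brownian} radial process, notes that on Assumption~1 (together with $T_+\le r_m^3$) one has $s_1,\sigma_1\in[T_-,T_+]$ deterministically, and then bounds $T_+-T_-$ — a purely Bessel(2) quantity — by combining a scale-function estimate (giving $O(m^{-(3+\delta)})$) with an SDE-confinement estimate (exponentially small). This gives the same conclusion $\beta>8$, but without ever needing a distributional statement about $\tS$ after a non-stopping time or about the radial part of the SRW. Your argument can be repaired by adopting this reduction to the Bessel process; as written, it relies on an estimate you would still need to prove.
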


\begin{proof}
Let us bound $\psi_{x,\xi}$ uniformly in $(x,\xi)$ belonging 
to $C_m\times \Gamma_m$. 
Recall that, in order to build the first excursion of the coupling, we try successively for each $\ell$, the trajectories $(\tW^{(1, \ell)}, \tS^{(1, \ell)}), l\geq 1$ and we stop with the first try which is not a failure.
We observe that the probability that one try in~$\ell$ ends up in a failure is not larger than~$1/2$
(the probability that the Brownian trajectory hits the inner circle). Hence, it is enough to bound the probability 
that a catastrophe occurs in one try in~$\ell$.
Say we look at the first one, we denote $\tS=\tS^{(1,1)}$ and 
$\tW=\tW^{(1,1)}$. 
We handle the different assumptions that appear 
in the definition of a success or a failure. 

But first, let us recall the following facts.
There exist 
real positive numbers~$c$ and~$ c'$ 
such that for all $t\geq 0$ and all $\kappa>0$ we have 
\begin{align}
\mathbb{P}\left[\max_{0\leq s\leq t}|\tW_s|> \kappa\sqrt{t}\right] 
& \leq c \exp(-c' \kappa^2),
\label{max_tW_greater}\\
\mathbb{P}\left[\max_{0\leq s\leq \kappa t}|\tW_s| 
< \sqrt{t}\right] 
& \leq c \exp(-c' \kappa),
\label{max_tW_less}
\end{align}
and the same bounds hold with real times~$t\ge 0$
replaced by integer times~$n$ and 
$\max_{0\leq s\leq t}|\tW_s|$ 
replaced by $\max_{0\leq j\leq n}|\tS_j|$.
These claims are simple consequences of the similar 
statements 
for the one-dimensional Brownian motion and the one-dimensional simple random walk, which are classical results. 
More specifically, in the one-dimensional case, the first statement is a consequence of the reflection principle together with usual large deviation estimates; the second statement follows from the fact that, on each subinterval of length $t$ (regardless of the starting position) the $\max$ is at least $\sqrt{t}$ with uniformly positive probability.
\begin{enumerate}
 \item 
 Pick $\delta>0$.
 The KMT bound \eqref{eq-KMT} gives
\[
\mathbb{P}\left[\max_{n=1, \dots, r_m^3} 
|\tS_n-\tW_n|\geq (3C+\delta) m\right]\leq K\rho_0^{3C\lambda} 
e^{-\lambda \delta m}.
\]
 Furthermore using a union bound and \eqref{max_tW_greater} we have
\begin{align*}
 \mathbb{P}\left[\max_{n=1, \dots, r_m^3} 
  \max_{n\leq t\leq n+1}|\tW_t-\tW_n|\geq \delta m\right]
 & \leq r_m^3 \times 
  \mathbb{P}\left[\sup_{0 \leq t \leq 1} |\tW_t|\geq \delta m\right]\\
 & \leq \rho_0^3 e^{3m} \times c e^{-c'(\delta m)^2}.
\end{align*}
 Hence, for any $D>3C$, 
denoting by~$p^{(1)}_{m}$ the probability that 
 Assumption~1 does not hold,
 we have $\sum_m m^2 p^{(1)}_{m}<\infty$.
 \item We recall that $s_1^{(1)}$ is the hitting time of $C_{m-1}\cup C_{m+1}$ by trajectory $x+\gamma_{-\theta_0}(\tW_{\cdot})$ and we denote it $s_1$ for short. Similarly, we denote $\sigma_1:=\sigma_1^{(1)}$ the hitting time of $\Gamma_{m-1}\cup \Gamma_{m+1}$ by trajectory $\xi+\tS_{\cdot}$. 
 We are going to bound the probability that Assumption~2 
 does not hold, uniformly in $(x, \xi)$. 
 We use the following implications. 
 \begin{itemize}
  \item 
   If $s_1<r_m$ (respectively, if $\sigma_1<r_m$),
   then process~$\tW$ (respectively, process~$\tS$) 
   has left the ball $B(0, r_m/4)$ before time~$r_m$.
  \item 
   If $s_1>r_m^3$ (respectively, if $\sigma_1>r_m^3$),
   then process~$\tW$ (respectively, process~$\tS$) 
   has stayed inside the ball $B(0, 4 r_m)$ up to time $r_m^3$.
  \end{itemize}
Choosing $(t,\kappa)=(r_m,\sqrt{r_m}/4)$ 
in~\eqref{max_tW_greater} 
and $(t,\kappa)=(16r_m^2, r_m/16)$ in~\eqref{max_tW_less}
(and similarly for the process~$\tS$), 
we derive a bound~$p^{(2)}_{m}$ on the probability that
Assumption~2 does not hold, which is uniform in $(x, \xi)$ 
and satisfies $\sum_m m^2 p^{(2)}_{m}<\infty$.  
  
\item Now, we are going to estimate the probability 
that $|s_1-\sigma_1|>m^\beta$.
We work on the event that Assumption~1 holds and also that both excursions go in the outward direction. 
Denote by $T_r$ the hitting time of~$\partial B(0,r)$ 
by process $x+r_{-\theta_0}(\tW_\cdot)$
and consider 
also
the times $T_-:=T_{r_{m+1}-Dm-1}$ and $T_+:=T_{r_{m+1}+Dm}$.
 Obviously, $s_1=T_{r_{m+1}}$ belongs to $[T_-,T_+]$.
On the event $\{T_+\le r_m^3\}$ 
(whose complement has exponentially decaying probability 
arguing as in point~2 
of the present proof), 
Assumption~1 ensures that $\sigma_1$ also belongs to $[T_-,T_+]$. 
Hence, $|s_1-\sigma_1|\le T_+-T_-$ and we are interested in bounding the difference $T_+-T_-$ with large probability.

Recall that $\tW$ is, up a to a time change $t\mapsto t/2$, a standard two-dimensional Brownian motion. Using the strong Markov property, 
it is just a matter of bounding $T^X_{r_{m+1}+Dm}$ 
where $X$ is a Bessel process of order 2 started at $r_{m+1}-Dm-1$ and where $T^X_r$ is the hitting time of $r$ by $X$.
In order to do so, we are going to show that, as soon as~$\delta>0$, 
the term
\[
m^2 \,  \IP_{r_{m+1}-Dm-1}\left[T^X_{r_{m+1}-m^{4+\delta}}<T^X_{r_{m+1}+Dm}\right]
\]
and the term
\[
m^2 \, \IP_{r_{m+1}-Dm-1}\left[ T^X_{r_{m+1}-m^{4+\delta}} 
\wedge T^X_{r_{m+1}+Dm}\ge m^{8+3\delta}\right],
\]
are the general terms of convergent series. Observe that if $T^X_{r_{m+1}+Dm} \ge m^{8+3\delta}$ then at least one of the two events appearing in the above probabilities holds. 

So let us prove the convergence of the two above-mentioned series. The first probability can be computed explicitely using that $r \mapsto \ln r$ is a scale function for $X$ and we obtain that it is $O(m^{-(3+\delta)})$. For the second probability, we consider the stochastic differential equation equation satisfied by $X$, i.e.:
\[dX_t = dB_t +\frac1{2X_t} d t, \qquad t\ge 0,\]
with $B_\cdot$ a standard Brownian motion. In integral form this yields that
\begin{equation}\label{eq-Bes2}
B_t =X_t-X_0-\int_0^t \frac1{2X_s} ds , \qquad t\ge 0.
\end{equation}
We make the following observation for any $0<a<b$ and $T>0$. If $X$ is started inside $[a, b]$ and stays there until time $T$, then \eqref{eq-Bes2} implies that for all $t\in[0, T]$:
\[ a-b - \frac{T}{2a} \le B_t \leq b-a.\]
For our purpose, we take $a=r_{m+1}-m^{4+\delta}$ and $b=r_{m+1}+Dm$ and $T=m^{8+3\delta}$, so we bound the probability we are interested in by the probability that a one-dimensional standard Brownian motion remains in $[a-b -\frac{T}{2a}, b-a]$ up to time $T$ and using the analogue of \eqref{max_tW_less} for the one-dimensional case, this decays as $\exp(-c m^{\delta})$, hence the convergence of the series.

Thus, as soon as $\beta>8$, we get a uniform bound $p^{(3\text{out})}_{m}$ on the probability that Assumption~1 holds, that both excursions go outward, but that Assumption~3 
does not hold, satisfying $\sum_m m^2 p^{(3\text{out})}_{m}<\infty$. 
Similarly, one can obtain, as soon as $\beta>8$, 
a uniform bound $p^{(3\text{in})}_{m}$ on the probability that Assumption~1 holds, that both excursions go inward but that Assumption~3 does not hold, satisfying $\sum_m m^2 p^{(3b)}_{m}<\infty$.
\item Recall the defintion of $\pi_{m, \xi}$ in \eqref{eq-defpi}. As already mentioned in Remark \ref{rem:rho02}, the asymptotic expression~\eqref{asympta}, or \eqref{why_rho_0}, for~$a$ 
yields that
\begin{equation*}
\sup_{\xi\in\Gamma_{m+1}} 1-\pi_{m, \xi}\!
=\!O\!\left(\!\frac1{m e^m}\!\right) \quad 
\text{and} \quad \sup_{\xi\in\Gamma_{m-1}} 
\left|\left(1-\frac2{m+1}\right)-\pi_{m, \xi}\right|
\!=\!O\!\left(\!\frac1{m e^m}\!\right).
\end{equation*} 
 These suprema yield a uniform bound $p^{(4)}_{m}$ on the probability 
 that neither Assumption~4a nor Assumption~4b hold, 
 and we have $\sum_m m^2 p^{(4)}_{m}<\infty$.
\end{enumerate}
 Putting everything together, 
 we have proved the convergence of the series $\sum_m m^2 p_m$, 
 when~$D>3C$ and~$\beta>8$.
\end{proof}

In the following, we assume that~$D$ and~$\beta$ 
have been chosen large enough so that~\eqref{eq-sumpm} holds. 
Recall the discussion in the beginning of 
Section~\ref{sec-coupling}: 
it is enough to construct the coupling with some 
\emph{large} starting level~$h$.

\begin{lem}\label{lem:expectationcatastrophe}
As $h$ increases to infinity, 
the probability that a catastrophe occurs 
in the construction of the coupling converges to~$0$.
\end{lem}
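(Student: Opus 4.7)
The plan is a union bound over excursion steps, reduced to a Green's function estimate for the 1-CSRW via the observation that the level sequence produced by the coupling has the law of the 1-CSRW started at~$h$, with no conditioning needed. Indeed, inspecting the accept/reject rules of section~\ref{sec-coupling}, the coupling's verdict for the $\hW$-marginal always agrees with the individual rule of section~\ref{constrwW} (``accept an inward step iff $U\le 1-\frac{2}{m+1}$''), regardless of whether the coupling labels the step a success, failure, or catastrophe. Hence the marginal $\hW$ built by the coupling is a genuine~$\hW$ and $(m_k)_{k\ge 0}$ is 1-CSRW, whatever else the coupling does.

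I would then let~$T$ be the index of the first catastrophe and use the definition of~$p_m$ together with the strong Markov property applied at step~$k-1$ to write
\[
\IP[T<\8]\;\le\;\sum_{k\ge 1}\IE\big[p_{m_{k-1}}\,\1{T>k-1}\big]\;\le\;\sum_{k\ge 0}\IE\big[p_{m_k}\big]\;=\;\sum_{m\ge 1}p_m\,G(h,m),
\]
where $G(h,m)$ is the Green's function of the 1-CSRW started at~$h$. Since the 1-CSRW is the Doob $h$-transform (with respect to $x\mapsto x$) of the simple random walk on~$\N$ killed at~$0$, whose Green's function is the classical $g_S(x,y)=2(x\wedge y)$, the $h$-transform identity $G(x,y)=\frac{h(y)}{h(x)}g_S(x,y)$ gives
\[
G(h,m)\,=\,\frac{m}{h}\cdot 2(h\wedge m)\,=\,\begin{cases}2m,&m\ge h,\\[2pt]2m^{2}/h,&m\le h,\end{cases}
\]
so that
\[
\IP[T<\8]\;\le\;\frac{2}{h}\sum_{m=1}^{h}m^2\,p_m\;+\;2\sum_{m>h}m\,p_m.
\]

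The conclusion then follows from Lemma~\ref{lem-sumfinite}: the first term is $O(1/h)$, and the second is the tail of a convergent series (since $mp_m\le m^2 p_m$ for $m\ge 1$), so both vanish as $h\to\8$. I do not anticipate any serious obstacle; the only point deserving care is the claim that the marginal law of $(m_k)$ really is 1-CSRW despite the possibility of catastrophes, which is a direct verification that the coupling's extra $U$-decisions (catastrophe vs.\ failure vs.\ success) do not corrupt the individual construction of~$\hW$ from section~\ref{constrwW}.
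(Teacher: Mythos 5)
Your proof is correct and takes essentially the same route as the paper's: both reduce the catastrophe probability to the bound $\sum_m p_m\,G(h,m)$ where $G$ is the Green's function of the 1-CSRW level chain started at level~$h$, compute $G(h,m)=2m\,\bigl(\tfrac{m}{h}\wedge1\bigr)$, and conclude from $\sum m^2 p_m<\8$ (Lemma~\ref{lem-sumfinite}) that both resulting pieces vanish as $h\to\8$. Your derivation of~$G$ via the Doob-transform identity applied to the killed-SRW Green's function $g_S(x,y)=2(x\wedge y)$ is a slightly more explicit version of the paper's one-line appeal to the scale function $x\mapsto 1/x$ of the 1-CSRW; one small point worth tightening is the claim that the coupling's $\hW$-verdict ``always agrees with the individual rule'' --- a catastrophe can be triggered in cases where the $\hW$-only rule of section~\ref{constrwW} would have accepted (e.g.\ $\tW$ exits outward but $U>\pi$, or the two exit directions disagree), so the fact that the $\hW$-marginal is still genuine $\hW$ and hence $(m_k)$ is 1-CSRW rests on the prescribed independent regeneration of $\hW$ after a catastrophe (which is what the paper's parenthetical ``even after a possible catastrophe'' is invoking), not on a try-by-try agreement of accept/reject rules.
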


\begin{proof}
Take $(x, \xi)\in C_h\times \Gamma_h$ and consider 
the coupling starting from $(x, \xi)$. 
For all $k\ge 0$, for all $m\ge 1$, conditionally on $\{m_k=m\}$, 
the probability that a catastrophe occurs in the ($k+1$)-th 
step of the construction of the coupling is bounded by~$p_m$. 
Therefore, using a union bound, the probability of an occurrence of a catastrophe 
is bounded by
\[
\sum_{k\geq 0}\sum_{m\geq 1} \mathbb{P}[m_k=m] \times p_m 
=  \sum_{m\geq 1} \mathbb{E}[\# \{k\geq 0 : m_k=m\}] \times p_m.
\]
Now, to estimate the expectation in the above sum, 
recall that $(m_k)_{k\geq 0}$ 
follows the law of 
1-CSRW
(even after a possible catastrophe), and that it is started at $h$ (since $\hW$ is started at $x\in C_h$). 
Using the fact that 
1-CSRW (stopped at the hitting time of $1$) has scaling function~$m\mapsto 1/m$, the optional stopping theorem yields that for $m\le h$, the 1-CSRW started at $h$ hits $m$ with probability is $ \frac{m}{h}.$ When $m>h$, this probability is equal to 1 due to transience. Furthermore, denoting by $E_m$ the expected number of visits at $m$ by the 1-CSRW started at $m$, we have using the Markov property after one step: $E_m=1+ \frac{m-1}{2m} E_m+ \frac{m+1}{2m} \frac{m}{m+1}E_m$, hence $E_m =2m$. Using the Markov property at the hitting time of $m$ by $(m_k)_{k\ge 0}$, we get:
\[ 
\mathbb{E}[\# \{k\geq 0 : m_k=m\}]= \Big(\frac{m}{h}\wedge 1\Big)\times 2m\le \frac{2m^2}{h}.
\]

Since $\sum m^2 p_m<\infty$,
we have derived a bound that converges to 0 as~$h$ goes to infinity.
\end{proof}

We are now ready to prove \eqref{eq-thm-boundt}. 
Consider some fixed $\eps \in (0,1)$. 
Take some~$h$ large enough so that with probability 
not smaller than $1-\eps$, 
the coupling starting from level~$h$ 
(i.e., from any prescribed points $x \in C_h, \xi \in \Gamma_h$) 
sees no catastrophe. From now on, we work on this event.

Consider some integer~$k$ and some time $t\in[t_k, t_{k+1}]$, 
so that~$\hW_t$ belongs to the ($k+1$)-th excursion of~$\hW$.
We use the following bound:
\begin{equation}
\label{eq-boundbytrans}
\big| |\s_t|-|\hW_t|\big| 
\leq \big| |\s_t|-|\s_{t-(t_k-\tau_k)}|\big|
  +\big| |\s_{t-(t_k-\tau_k)}|-|\hW_t|\big|.
\end{equation}
Since $\s$ makes jumps of size 1, 
the first term in the right hand-side of~\eqref{eq-boundbytrans} 
is bounded by $|t_k-\tau_k|$.
For the second term, we proceed by distinguishing cases. 
We abbreviate $t^*=t-(t_k-\tau_k)$. Note that~$t^*$ 
is larger than~$\tau_k$. 
We distinguish cases according to the fact that~$t^*$ 
belongs to the ($k+1$)-th excursion of~$\s$, or not.
\begin{itemize}
\item 
 \emph{case 1: $\tau_k \leq t^* \leq \tau_{k+1}$.} 
 Since~$t$ and~$t^*$ both belong to the ($k+1$)-th 
 excursion of~$\hW$ (respectively, $\s$), 
 and $t^*-\tau_k=t-t_k$, 
 the second term in the right-hand side of~\eqref{eq-boundbytrans} 
 can be controlled using the KMT bound. 
 Recall the role of the angle~$\theta_k$ 
 in the construction of the coupling. 
If~$\ell$ is the index of the try where the first success
happened during the ($k+1$)-th step of the coupling, 
then 
$\s_{t^*}=\s_{\tau_k}+\tS_{t^*-\tau_k}^{(k+1,\ell)}$, 
where $(\tS_t^{(k+1,\ell)})_{t\geq 0}$ 
is the linear interpolation of $(\tS_n^{(k+1,\ell)})_{n\geq 0}$.
Therefore
\begin{align*}
\big| |\s_{t^*}|-|\hW_t|\big|
& = \big| |\s_{t^*}|-|\gamma_{\theta_k}(\hW_t)|\big|\\
& \leq \big| \s_{t^*}-\gamma_{\theta_k}(\hW_t)\big|\\
& \leq \big| \s_{t^*}-\s_{\tau_k}-\gamma_{\theta_k}(\hW_t-\hW_{t_k})\big|
+ \big| \s_{\tau_k}-\gamma_{\theta_k}(\hW_{t_k})\big|\\
& =\big|\tS_{t^*-\tau_k}^{(k+1,\ell)}-\tW_{t-t_k}^{(k+1,\ell)}\big|
+ | \s_{\tau_k}-\gamma_{\theta_k}(\hW_{t_k})\big|\\
& \leq D m_k+ 1.
\end{align*}

\item 
\emph{case 2.} If $t^*>\tau_{k+1}$, we need to adapt this argument.
We can still bound the distance between $\gamma_{\theta_k}(\hW_{t})$ 
and $\s_{\tau_k}+\tS_{t^*-\tau_k}^{(k+1,\ell)}$, 
but this time $\s_{\tau_k}+\tS_{t^*-\tau_k}^{(\ell)}$ 
is (a priori) not equal to $\s_{t^*}$, 
so we also need to bound the difference between them. 
We do that by ``backtracking'' to time $\tau_{k+1}$. 
Observe that
\[
\s_{\tau_{k+1}}
=(\s_{\tau_k}+\tS_{t^*-\tau_k}^{(k+1,\ell)})
+ \tS_{\tau_{k+1}-\tau_k}^{(k+1,\ell)} 
-\tS_{t^*-\tau_k}^{(k+1,\ell)}, 
\]
so that, by the triangle inequality, 
\begin{align*}
\big|\s_{t^*}-(\s_{\tau_k}+\tS_{t^*-\tau_k}^{(k+1,\ell)})\big|
& \leq \big|\s_{t^*}-\s_{\tau_{k+1}}\big|+\big|\s_{\tau_{k+1}}
-(\s_{\tau_k}+\tS_{t^*-\tau_k}^{(k+1,\ell)})\big|\\
& \leq 2(t^*-\tau_{k+1})\\
& \leq 2 |t_{k+1}-\tau_{k+1}|.
\end{align*}
Proceeding as in case 1, we obtain
\begin{equation*}
\big| |\s_{t^*}|-|\hW_t|\big|
 \leq D m_k+1+ 2|t_{k+1}-\tau_{k+1}|.
\end{equation*}
\end{itemize}

Overall, the two terms in the right-hand side 
of~\eqref{eq-boundbytrans} are bounded respectively 
by $|t_k-\tau_k|$ and $Dm_k+1+2|t_{k+1}-\tau_{k+1}|$. 
To conclude the proofs we bound these quantities
when~$t$ is large, i.e., when~$k$ is large.

Recall the notations~$s_k^{(\ell)}$ and~$\sigma_k^{(\ell)}$ 
 from the construction of the coupling. 
If~$\ell$ is the index of the first success 
in the $k$-th step of the construction of the coupling, 
we denote these quantities by~$s_k$ and~$\sigma_k$: they are the respective lengths of the $k$-th excursions of $\hW$ and $\s$. For all $k\geq 1$, we have
\[
 t_k=\sum_{j=1}^k s_j \quad 
 \text{ and } \quad \tau_k=\sum_{j=1}^k \sigma_j.
\]
We recall Assumptions 2 and 3 in the definition of a success. 
We have for all $k\geq 1$
\begin{equation}
\label{eq-bountk}
t_k\geq s_k \geq r_{m_k}= \rho_0 \, e^{m_k},
\end{equation}
so
\[
m_k\leq \ln (t_k/\rho_0) \leq \ln ( t/\rho_0).
\]
Besides, as a property of 
1-CSRW,
almost surely, we have eventually
\[
m_k\geq k^{\frac13}
\]
(in fact, the exponent $\frac13$ can be replaced by 
any exponent in $(0, \frac12)$, 
see for example Theorem~3.2.7 in \cite{MPW16} 
and also Example~3.2.8 there);
hence, if~$k$ is large enough, then
\begin{equation}
\label{eq-boundk}
k\leq m_k^3\leq \ln^3 (t/\rho_0).
\end{equation}
Furthermore, for all $k\ge 1$:
\begin{equation}
\label{eq-boundtk-tauk}
|t_k - \tau_k|\leq \sum_{j=1}^{k} |s_j-\sigma_j|
\leq \sum_{j=0}^{k-1} m_j^\beta
\leq k \max_{j=1, \dots, k} m_j^\beta \leq k (k+h)^\beta 
\leq (k+h)^{\beta +1}.
\end{equation}
Using~\eqref{eq-boundk} and~\eqref{eq-boundtk-tauk}, 
we obtain for large $k$ that 
\[
|t_k-\tau_k|\leq (k+h)^{\beta+1}
\leq (\ln^3 (t/\rho_0)+h)^{\beta+1}.\]
\[|t_{k+1}-\tau_{k+1}|\leq (k+1+h)^{\beta+1}
\leq (\ln^3 (t/\rho_0) +1+h)^{\beta+1}.
\]
Hence, we have shown~\eqref{eq-thm-boundt}
for any $\alpha>3(\beta+1)$.

\medskip

To finish the proof of Theorem~\ref{th-couplingv1},
we argue why~\eqref{eq-thm-boundt}
implies~\eqref{eq-thm-corsim}. 
We use the result (apply Theorem~\ref{th-CC22} 
with $g(t)=e^{-t/2}$) 
that eventually $e^{\sqrt{\ln t}} \leq |\hW_t|$, so that
\[
\ln^\alpha t \leq \ln^{2\alpha} |\hW_t| =o(|\hW_t|).
\]
The asymptotic equivalence in~\eqref{eq-thm-corsim} 
therefore follows from~\eqref{eq-thm-boundt}.
$\hfill\Box$

\begin{rem}
For the sake of completeness, let us remark that 
it is possible to build stronger versions of the coupling.
\begin{itemize}
\item It is possible to build a version of the coupling 
where the coupling event holds with probability~$1$. 
In order to do so, we describe a way to ``patch'' 
the trajectories after a catastrophe, 
instead of simply letting them evolve independently. 
When a catastrophe occurs, 
treat both trajectories separately according 
to their respective construction procedures: 
each excursion may be accepted or discarded and this extends
(or not) processes~$\s$ and~$\hW$ by one excursion. 
At this point, the two processes may be lying at different levels.
To solve this, let the process with the smaller level 
run until it reaches the larger level. 
Then, the procedure can begin again,
starting from this common level. 

Due to catastrophes, the level sequences of~$\s$ and~$\hW$
do not necessarily coincide. 
This makes it more difficult to write it cleanly, 
which is why we chose not to present this almost sure 
construction in the article.  

It is straightforward to adapt the proof of 
Lemma~\ref{lem:expectationcatastrophe} in order to show that 
the number of catastrophes occurring 
during this construction has a finite expectation, 
hence this number is almost surely finite. 
What happens before the last catastrophe 
can be disregarded as it is eventually absorbed 
in the bound~$\ln^\alpha t$, 
so the proof of~\eqref{eq-thm-boundt} 
can be conducted without modifications. 

\item 
It is likely possible to build a version of the coupling satisfying
a stronger version of~\eqref{eq-thm-boundt}, 
obtained by removing the moduli there:
\begin{equation}
\label{eq-thm-boundtwomoduli}
 \big| \s_t-\hW_t \big|\leq \ln^\alpha t
    \quad \text{ eventually, as } t\to \infty.
\end{equation}
This, however, would require some finer control on the exit point
locations (of the $\s$- and $\hW$-excursions between the levels);
since~\eqref{eq-thm-boundt} is already enough for 
the purposes of this paper, we decided to 
leave proving~\eqref{eq-thm-boundtwomoduli}
as a possible topic for future research.
\end{itemize}
\end{rem}

\section{Proof of Theorem \ref{th-main}}
\label{s_proof_main}
As announced in the introduction, 
we now use Theorem~\ref{th-CC22} and the coupling of Theorem~\ref{th-couplingv1}
to prove Theorem~\ref{th-main}.

\begin{proof}[Proof of the first statement of Theorem \ref{th-main}]
Take $g$ satisfying the conditions of the theorem, 
we may assume without loss of generality that 
$(\ln t) g(\ln \ln t)$ goes to infinity 
as~$t$ goes to infinity. 
Take any $\eps \in (0,1)$ and consider the coupling $(\hW, \s)$ 
associated to~$\eps$. 
With probability not smaller than $1-\eps$, we have
\[
\frac{|\hW_t|}{2}\leq|\s_t|\leq2|\hW_t| \quad \text{ eventually}.
\]
Observe that $g/2$ and $2g$ satisfy the conditions of 
Theorem~\ref{th-CC22} and therefore 
we obtain on the event that the coupling is successful:
\begin{itemize}
 \item if $\int^\8 g=\infty$, 
  then for an unbounded set of~$t$'s:
  \begin{equation}
   M_{\lceil t \rceil}
   \leq |\s_{\lceil t \rceil}|
   \leq |\s_t|+1 \leq 2|\hW_t|+1 
   \leq 2e^{(\ln t) g(\ln \ln t)/2}+1,
  \end{equation} 
  which is eventually not larger than 
  $e^{(\ln \lceil t \rceil) g(\ln \ln \lceil t \rceil)}$;
 \item if $\int^\8 g<\infty$, then eventually:
 \begin{equation}
   |\s_n| \geq \frac{|\hW_n|}{2}
    \geq \frac{e^{2(\ln n) g(\ln \ln n)}}{2},
 \end{equation} 
  which is eventually not smaller than $e^{(\ln n) g(\ln \ln n)}$.
Since $n\mapsto e^{(\ln n) g(\ln \ln n)}$ is non-decreasing, 
it follows that $M_n\geq~e^{(\ln n) g(\ln \ln n)}$ eventually.
\end{itemize}
We have shown this to be true with probability 
at least~$1-\eps$, for any $\eps \in (0,1)$, 
so it is true almost surely. 
The first part of Theorem \ref{th-main} follows.
\end{proof}

\begin{proof}[Proof of the second statement of Theorem \ref{th-main}]
Consider some $\eps \in (0,1)$ and the 
associated coupling $(\s, \hW)$ 
having success probability not smaller than $1-\eps$. 
On this event, we obtain using $|\s_t| \sim |\hW_t|$ and Theorem~\ref{th-CC22}, that 
\[
  \limsup_{t \to \8} \frac{M_n}{\sqrt{n \ln \ln \ln  n}}=K 
\]
(we also made use of the fact that 
$|\s_t-\s_{\lfloor t \rfloor}|\leq 1$).
The latter is true with probability not smaller than $1-\eps$, 
for any $\eps \in (0,1)$, so it is true almost surely.
The second part of the theorem follows.
\end{proof}


\section*{Acknowledgements}
The authors were partially supported by
CMUP, member of LASI, which is financed by national funds
through FCT --- Funda\c{c}\~ao
para a Ci\^encia e a Tecnologia, I.P., 
under the project with reference UIDB/00144/2020.

\end{document}